\documentclass{amsart}
\usepackage{amssymb,amsmath,amsthm,hyperref}

\usepackage[normalem]{ulem}

\theoremstyle{theorem}
\newtheorem{theorem}{Theorem}
\newtheorem{proposition}[theorem]{Proposition}
\newtheorem{corollary}[theorem]{Corollary}

\theoremstyle{definition}
\newtheorem{definition}{Definition}

\title[{Combinatorial Extensions on Almost-Distinct Partitions}]{Combinatorial Extensions on Andrews and El~Bachraoui's Almost-Distinct Partitions}

\author{Brian Hopkins}
\address{Department of Mathematics and Statistics, 
Saint Peter's University, Jersey City, NJ 07306, USA}
\email{bhopkins@saintpeters.edu}

\date{}

\begin{document}

\maketitle

\begin{abstract}
Andrews and El Bachraoui recently studied integer partitions where the smallest part is repeated a specified number of times and any other parts are distinct.  Their results included two ``surprising identities'' for which they requested combinatorial proofs.  We provide combinatorial proofs for an infinite family of related identities and also consider the analogous partitions where only the largest part can be repeated.  The results give rise to infinite families of inequalities for the number of partitions with distinct parts.
\end{abstract}

\section{Background}

Given an integer $n > 0$, we say $\lambda = (\lambda_1, \dots, \lambda_t)$ is a partition of $n$ if each $\lambda_i$ is a positive integer, $\sum \lambda_i = n$, and $\lambda_1 \ge \lambda_2 \ge \cdots \ge \lambda_t$.
Write $P(n)$ for the set of partitions of $n$.
Requiring that the inequalities on the parts are strict, i.e., $\lambda_1 > \lambda_2 > \cdots > \lambda_t$, gives $Q(n) \subset P(n)$, the set of distinct part or strict partitions of $n$.  Throughout, we use lowercase letters for set sizes, e.g., $q(n) = |Q(n)|$.

In 2008, George Andrews initiated a rich subfield in integer partition theory focusing on the smallest parts of partitions \cite{A08}.  In this vein, Andrews and Mohamed El Bachraoui recently considered partitions whose smallest part is repeated a specified number of times and any other parts are distinct \cite{AEB25}.

\begin{definition}
Given an integer $k \ge 1$, let $S_k(n) \subset P(n)$ be the partitions of $n$ where the smallest part occurs exactly $k$ times and any other parts are distinct.
\end{definition}

For example, $S_2(8) = \{(6,1,1), (4,4), (4,2,2))\}$.  Note that the $k=1$ case means that all parts appear exactly once, so that $S_1(n) = Q(n)$.  (Andrews and El Bachraoui use the notation $\text{Spt}k_d(n)$.)

The primary motivation for the current work concerns the following results from their section ``applications to partitions'' \cite[\S 3]{AEB25}.

\begin{proposition}[Corollaries 5 and 6, Andrews and El Bachraoui] \label{s3}
For any integer $n \ge 2$, 
\begin{equation}
s_2(n) = 2q(n-1) - q(n). \label{cor5a}
\end{equation}
For any integer $n \ge 4$,
\begin{equation}
s_3(n) = 2q(n-3) - 2q(n-1) + q(n) \label{cor5b}
\end{equation}
and
\begin{equation}
2q(n-1) - 2q(n-3) \le q(n) \le 2q(n-1). \label{cor6}
\end{equation}
\end{proposition}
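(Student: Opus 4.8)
The plan is to dispatch the two exact identities \eqref{cor5a} and \eqref{cor5b} with a short generating-function computation, after which the inequalities \eqref{cor6} are immediate. For $k \ge 1$ set $S_k(x) = \sum_{n \ge 0} s_k(n)\,x^n$. Sorting a partition counted by $s_k(n)$ by the value $m$ of its repeated smallest part, the remaining parts form an arbitrary strict partition with every part exceeding $m$, so
\[
S_k(x) \;=\; \sum_{m \ge 1} x^{km}\,D_m(x), \qquad\text{where}\qquad D_m(x) := \prod_{i > m}\bigl(1+x^i\bigr);
\]
in particular $D_0(x) = \prod_{i \ge 1}(1+x^i)$ is the generating function for $q(n)$, which I will call $Q(x)$, and $S_1(x) = Q(x) - 1$. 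The one fact needed is the telescoping relation $D_{m-1}(x) = (1+x^m)D_m(x)$, equivalently $x^m D_m(x) = D_{m-1}(x) - D_m(x)$. Writing $S_k(x) = \sum_{m \ge 1} x^{(k-1)m}\bigl(x^m D_m(x)\bigr)$, substituting this relation, splitting the sum, and reindexing the shifted piece by $m \mapsto m-1$ yields the recursion
\[
S_k(x) \;=\; x^{k-1}\bigl(Q(x) + S_{k-1}(x)\bigr) - S_{k-1}(x) \qquad (k \ge 2).
\]

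Iterating from $S_1(x) = Q(x) - 1$ gives $S_2(x) = 2xQ(x) - Q(x) + 1 - x$, and then $S_3(x) = x^2 Q(x) + (x^2 - 1)S_2(x) = 2x^3 Q(x) - 2xQ(x) + Q(x) + (x^2 + x - x^3 - 1)$. Reading off the coefficient of $x^n$: in $S_2(x)$ the correction $1 - x$ affects only $n \le 1$, so $s_2(n) = 2q(n-1) - q(n)$ for $n \ge 2$, which is \eqref{cor5a}; in $S_3(x)$ the correction has degree $3$, so $s_3(n) = 2q(n-3) - 2q(n-1) + q(n)$ for $n \ge 4$, which is \eqref{cor5b}. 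I would check the cases $n \le 3$ directly to confirm that $n \ge 2$ and $n \ge 4$ are precisely the ranges where the polynomial corrections vanish (for instance, \eqref{cor5b} genuinely fails at $n = 3$).

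Finally, \eqref{cor6} needs nothing new: since $s_2(n) = |S_2(n)| \ge 0$, identity \eqref{cor5a} forces $q(n) \le 2q(n-1)$, and since $s_3(n) = |S_3(n)| \ge 0$, identity \eqref{cor5b} forces $q(n) \ge 2q(n-1) - 2q(n-3)$; together these are the stated sandwich for $n \ge 4$. Along this route there is no real obstacle — the only delicate point is the index shift $m \mapsto m-1$ and the bookkeeping of the resulting low-degree terms, which pin down the exact ranges of validity. The more substantial, and more thematically appropriate, challenge would be to bypass the computation of $S_2(x)$ and $S_3(x)$ and instead prove \eqref{cor5a} and \eqref{cor5b} bijectively: for \eqref{cor5a} one wants an explicit correspondence $S_2(n) \sqcup Q(n) \leftrightarrow Q(n-1) \sqcup Q(n-1)$, and more generally a combinatorial reading of the identity $s_k(n) + s_{k-1}(n) = s_{k-1}(n-k+1) + q(n-k+1)$ that the recursion above encodes; constructing such maps cleanly is where the work lies, and is the kind of argument the body of this paper develops.
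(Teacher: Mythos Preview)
Your argument is correct: the telescoping identity $x^m D_m(x)=D_{m-1}(x)-D_m(x)$ really does give the recursion $S_k(x)=x^{k-1}\bigl(Q(x)+S_{k-1}(x)\bigr)-S_{k-1}(x)$, and your computations of $S_2(x)$ and $S_3(x)$ and of the low-degree corrections are right, so \eqref{cor5a}, \eqref{cor5b}, and hence \eqref{cor6} follow. The route, however, is genuinely different from the paper's. The paper proves \eqref{cor5a} by splitting $S_2(n)\cup Q(n)$ into the two pieces $A(n)$ (last part $1$) and $B(n)$ (last part $>1$) and giving explicit bijections $A(n)\cong Q(n-1)$ and $B(n)\cong Q(n-1)$; it then proves bijectively the recursion $s_k(n)+s_{k-1}(n)=q(n-k+1)+s_{k-1}(n-k+1)$ (Theorem~\ref{genSn}) and obtains \eqref{cor5b} by substitution. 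Your generating-function recursion is exactly the analytic shadow of Theorem~\ref{genSn}: reading your identity at the $x^n$-coefficient level recovers precisely that relation. So you have reproduced, in the $q$-series language of Andrews--El~Bachraoui, what the paper deliberately re-proves by bijection; your approach is shorter and handles the boundary cases uniformly via the polynomial corrections, while the paper's approach answers the explicit request for combinatorial proofs and makes the equinumerosity statements (e.g., $|A(n)|=|B(n)|$) visible. You already identify this gap yourself in your final paragraph.
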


Notice that the two inequalities of \eqref{cor6} follow from the quantities $s_3(n)$ and $s_2(n)$ being nonnegative in \eqref{cor5b} and \eqref{cor5a}, respectively.  

Proposition \ref{s3} includes results for which Andrews and El Bachraoui requested combinatorial proofs.  In the next section, we supply combinatorial proofs for \eqref{cor5a} and a new result, Theorem \ref{genSn}, from which identities analogous to \eqref{cor5b} follow for every $k$, not just $k = 3$.  

The family of identities in Theorem \ref{genSn} below also gives a sequence of inequalities expanding \eqref{cor6}.  In Section 3, we consider partitions where only the largest part can be repeated which will lead to another sequence of inequalities for $q(n)$.

\section{Results for all $s_k(n)$}

We begin with a combinatorial proof of \cite[Cor. 5(a)]{AEB25} as requested by Andrews and El Bachraoui to complement their analytic approach.

\begin{proof}[Combinatorial proof of \eqref{cor5a}]
Let $A(n) \subset S_2(n) \cup Q(n)$ be the partitions in the union with last part 1 and let $B(n)$ be the complementary subset of $S_2(n) \cup Q(n)$ with last part greater than one.  We establish bijections
\[ A(n) \cong Q(n-1), \;  B(n) \cong Q(n-1)\]
from which the claim follows.

For the first bijection, given $(\lambda_1, \ldots, \lambda_{t-1}, 1) \in A(n)$, remove the final part 1.  Whether the partition comes from $S_2(n)$ (with $\lambda_{t-1} = 1$ also) or from $Q(n)$ (with $\lambda_{t-1} > 1$), the image $(\lambda_1, \ldots, \lambda_{t-1}) \in Q(n-1)$.  For the reverse direction, given $(\mu_1, \ldots, \mu_s) \in Q(n-1)$, append a part 1, making the image in $A(n)$ (more specifically, the image is in $S_2(n)$ if $\mu_s = 1$, otherwise the image is in $Q(n)$).

For the second bijection, given $(\lambda_1, \ldots, \lambda_{t-1}, \lambda_t) \in B(n)$, replace the last part by $\lambda_t - 1$ (which is positive).  Whether the partition comes from $S_2(n)$ (with $\lambda_{t-1} = \lambda_t$) or from $Q(n)$ (with $\lambda_{t-1} > \lambda_t$), the image $(\lambda_1, \ldots,  \lambda_{t-1}, \lambda_t - 1) \in Q(n-1)$.  For the reverse direction, given $(\mu_1, \ldots, \mu_{s-1}, \mu_s) \in Q(n-1)$, increase the last part by one, making the image in $B(n)$ (the image is in $S_2(n)$ if $\mu_{s-1} = \mu_s + 1$, otherwise the image is in $Q(n)$).

Since $A(n) \cap B(n) = \varnothing$ and $A(n) \cup B(n) = S_2(n) \cup Q(n)$, the bijections establish $s_2(n) + q(n) = 2q(n-1)$, equivalent to \eqref{cor5a}.
\end{proof}

This proof also establishes that $A(n)$ and $B(n)$ are equinumerous subsets of $S_2(n) \cup Q(n)$, the partitions of $n$ with smallest part occurring once or twice and any other parts distinct \cite[A087135]{o}.

See Table \ref{5aex} for an example of the bijection.  

\begin{table}[h]
\centering
\caption{The $n=9$ example of the bijections in our proof of \eqref{cor5a}. Short horizontal bars separate partitions in $S_2(9)$ and $Q(9)$ within $A(9)$ and $B(9)$.} \label{5aex}
\renewcommand{\arraystretch}{1.25}
\begin{tabular}{l|l||l|l}
$A(9)$ & $Q(8)$ & $B(9)$ & $Q(8)$ \\ \hline
$(7,1,1)$		& $(7,1)$		& $(5,2,2)$	& $(5,2,1)$ \\ \cline{3-3}
$(5,2,1,1)$	& $(5,2,1)$	& $(9)$		& $(8)$ \\
$(4,3,1,1)$	& $(4,3,1)$	& $(7,2)$		& $(7,1)$ \\ \cline{1-1}
$(8,1)$		& $(8)$		& $(6,3)$		& $(6,2)$ \\
$(6,2,1)$		& $(6,2)$		& $(5,4)$		& $(5,3)$ \\
$(5,3,1)$		& $(5,3)$		& $(4,3,2)$	& $(4,3,1)$
\end{tabular}
\end{table}

Rather than give a combinatorial proof of \eqref{cor5b}, an identity involving $s_3(n)$, we establish the following general result.

\begin{theorem} \label{genSn}
Given integers $k \ge 2$ and $n \ge k$,
\[s_k(n) = - s_{k-1}(n) + s_{k-1}(n-k+1) + q(n-k+1).\]
\end{theorem}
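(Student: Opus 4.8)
The plan is to adapt the strategy of the proof of \eqref{cor5a} to the rearranged identity
\[
s_k(n) + s_{k-1}(n) = s_{k-1}(n-k+1) + q(n-k+1).
\]
Since a partition's smallest part has a well-defined multiplicity, the union $S_k(n) \cup S_{k-1}(n)$ is disjoint, and I would split it into $A(n)$, the partitions whose smallest part is $1$, and $B(n)$, the partitions whose smallest part exceeds $1$. The goal is then to produce bijections $A(n) \cong Q(n-k+1)$ and $B(n) \cong S_{k-1}(n-k+1)$, after which $|A(n)| + |B(n)| = s_k(n) + s_{k-1}(n)$ yields the identity.

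For the $B(n)$ bijection, send a partition $\lambda$ with smallest part $m \ge 2$ to the partition obtained by subtracting $1$ from $k-1$ of the copies of $m$. If $\lambda \in S_{k-1}(n)$ there are exactly $k-1$ such copies, all get decreased, and the image is a partition of $n-k+1$ with smallest part $m-1$ of multiplicity $k-1$ and with no part equal to $m$ (the remaining parts of $\lambda$ are $\ge m+1$). If $\lambda \in S_k(n)$ one copy of $m$ survives, and the image is a partition of $n-k+1$ with smallest part $m-1$ of multiplicity $k-1$ together with a single part $m = (m-1)+1$. In both cases the image lies in $S_{k-1}(n-k+1)$; the two cases are told apart by whether the part ``(smallest part) $+ 1$'' occurs; and each is reversed --- in the $S_{k-1}$ case by adding $1$ to all $k-1$ smallest parts, in the $S_k$ case by replacing the $k-1$ smallest parts and the lone next part by $k$ copies of that next part --- so the map is a bijection onto all of $S_{k-1}(n-k+1)$.

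For the $A(n)$ bijection, delete $k-1$ of the copies of the part $1$. If $\lambda \in S_k(n)$ one $1$ remains and the image is a distinct partition of $n-k+1$ containing $1$; if $\lambda \in S_{k-1}(n)$ all the $1$'s vanish and the image is a distinct partition of $n-k+1$ with smallest part at least $2$. These two families partition $Q(n-k+1)$ according to whether $1$ is a part, and appending the appropriate number of $1$'s inverts the map, giving $A(n) \cong Q(n-k+1)$. Summing the two bijections gives the rearranged identity, and moving $s_{k-1}(n)$ to the other side gives the theorem; specializing to $k = 3$ and substituting \eqref{cor5a} recovers \eqref{cor5b}.

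The main things to watch are the edge conditions that keep the maps well-defined: that $m \ge 2$ on $B(n)$ is exactly what makes $m - 1$ a positive part; that the remaining parts of $\lambda$ (being $\ge m+1$) neither collide with the decreased parts nor accidentally create a part equal to $m$ in the $S_{k-1}$ case, so the ``presence of (smallest)$\,+1$'' criterion really does separate the two preimage types; and that $k \ge 2$ and $n \ge k$ make the relevant sets nonempty where that is used. Beyond these, the only real work is checking that the stated images genuinely exhaust $S_{k-1}(n-k+1)$ and $Q(n-k+1)$ --- that is, that every partition in the target has a well-defined preimage of exactly one of the two types --- which I expect to be the most delicate bookkeeping step.
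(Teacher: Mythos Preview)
Your proposal is correct and follows essentially the same approach as the paper's proof: the paper also splits $S_k(n)\cup S_{k-1}(n)$ according to whether the smallest part is $1$ (calling the pieces $C(n)$ and $D(n)$), then removes $k-1$ ones to get the bijection with $Q(n-k+1)$ and subtracts $1$ from the last $k-1$ parts to get the bijection with $S_{k-1}(n-k+1)$, with the same ``presence of (smallest)$+1$'' criterion distinguishing the preimages. The only cosmetic difference is that the paper describes the inverse on $S_{k-1}(n-k+1)$ uniformly as ``increase the final $k-1$ parts by one'' rather than splitting into your two cases, but the content is identical.
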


\begin{proof}
Let $C(n) \subset S_k(n) \cup S_{k-1}(n)$ be the partitions in the union with last part 1 and let $D(n)$ be the complementary subset of $S_k(n) \cup S_{k-1}(n)$ with last part greater than one.  We establish bijections
\[ C(n) \cong Q(n-k+1), \; D(n) \cong S_{k-1}(n-k+1)\]
from which the claim follows.

For the first bijection, given $(\lambda_1, \ldots, \lambda_{t-k}, 1, \ldots, 1) \in C(n)$ (with $k-1$ parts 1 specified), remove the final $k-1$ parts 1.  Whether the partition comes from $S_k(n)$ (with $\lambda_{t-k} = 1$ also) or from $S_{k-1}(n)$ (with $\lambda_{t-k} > 1$), the image $(\lambda_1, \ldots, \lambda_{t-k}) \in Q(n-k+1)$.  For the reverse direction, given $(\mu_1, \ldots, \mu_s) \in Q(n-k+1)$, append $k-1$ parts 1, making the image $(\mu_1, \ldots, \mu_s, 1, \ldots, 1) \in C(n)$ (more specifically, the image is in $S_k(n)$ if $\mu_s = 1$, otherwise the image is in $S_{k-1}(n)$).

For the second bijection, given $(\lambda_1, \ldots, \lambda_{t-k}, i, \ldots, i) \in D(n)$ (with $k-1$ smallest parts $i > 1$ specified), decrease the final $k-1$ parts by one (note that $i-1$ is positive).  Whether the partitions comes from $S_k(n)$ (with $\lambda_{t-k} = i$ also) or from $S_{k-1}(n)$ (with $\lambda_{t-k} > i$), the image $(\lambda_1, \ldots, \lambda_{t-k}, i-1, \ldots, i-1) \in S_{k-1}(n-k+1)$.  For the reverse direction, given $(\mu_1, \ldots, \mu_{s-k}, j, \ldots, j) \in S_{k-1}(n-k+1)$ (with exactly $k-1$ parts $j$), increase the final $k-1$ parts by one, making the image $(\mu_1, \ldots, \mu_{s-k}, j+1, \ldots, j+1) \in D(n)$ (more specifically, the image is in $S_k(n)$ if $\mu_{s-k} = j+1$, otherwise the image is in $S_{k-1}(n)$).

Since $C(n) \cap D(n) = \varnothing$ and $C(n) \cup D(n) = S_k(n) \cup S_{k-1}(n)$, the bijections establish $s_k(n) + s_{k-1}(n) = q(n-k+1) + s_{k-1}(n-k-1)$, equivalent to the claim.
\end{proof}

See Table \ref{genSnex} for an example of the bijection.

\begin{table}[h]
\centering
\caption{The $n=10$, $k = 3$ example of the bijections used in the proof of Theorem \ref{genSn}. Short horizontal bars separate partitions in $S_3(10)$ and $S_2(10)$ within $C(10)$ and $D(10)$.} \renewcommand{\arraystretch}{1.25}
\begin{tabular}{l|l||l|l}
$C(10)$ & $Q(8)$ & $D(10)$ & $S_2(8)$ \\ \hline
$(7,1,1,1)$	& $(7,1)$		& $(4,2,2,2)$		& $(4,2,1,1)$ \\ \cline{3-3}
$(5,2,1,1,1)$	& $(5,2,1)$	& $(6,2,2)$		& $(6,1,1)$ \\
$(4,3,1,1,1)$	& $(4,3,1)$	& $(4,3,3)$		& $(4,2,2)$ \\ \cline{1-1}
$(8,1,1)$		& $(8)$		\\
$(6,2,1,1)$	& $(6,2)$		\\
$(5,3,1,1)$	& $(5,3)$		
\end{tabular}
\label{genSnex}
\end{table}

The $k = 2$ case of Theorem \ref{genSn} reduces to \eqref{cor5a}.  Substituting that into the $k = 3$ case for $n \ge 4$ gives
\begin{align*}
s_3(n) &= -s_2(n) + s_2(n-2) + q(n-2) \\
& = -(2q(n-1) - q(n)) + (2q(n-3) - q(n-2)) + q(n-2) \\
& = 2q(n-3) - 2q(n-1) + q(n)
\end{align*}
which is \eqref{cor5b}, i.e., we have an alternative proof of \cite[Cor. 5(b)]{AEB25} using our combinatorial proof of Theorem \ref{genSn}.

Continuing, the next two $s_k(n)$ expressions written in terms of the $q(n)$ sequence are
\begin{align}
s_4(n) & = 2q(n-6) - 2q(n-4) + 2q(n-1) - q(n), \label{s45} \\
s_5(n) & = 2q(n-10) - 2q(n-8) + 2q(n-6) - 2q(n-5) \notag \\
& \qquad {} + 2q(n-4) - 2q(n-1) + q(n). \notag
\end{align}

In the same way that the expressions for $s_2(n)$ and $s_3(n)$ give the two inequalities of \eqref{cor6}, Theorem \ref{genSn} gives an infinite family of inequalities above and below $q(n)$.  The following corollary records the next two results, which follow from the expressions for $s_4(n)$ and $s_5(n)$ in \eqref{s45}, respectively.  

\begin{corollary} \label{qS45}
For integers $n \ge 7$, 
\[ q(n)  \le 2q(n-1) - 2q(n-4) + 2q(n-6),\]
and, for $n \ge 11$, 
\[q(n) \ge 2q(n-1) - 2q(n-4) + 2q(n-5) - 2q(n-6) + 2q(n-8) - 2q(n-10).\]
\end{corollary}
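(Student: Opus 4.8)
The plan is to obtain both inequalities from one trivial observation --- that $s_k(n) = |S_k(n)| \ge 0$ for every $k \ge 1$ and $n \ge 1$ --- applied to the closed forms for $s_4(n)$ and $s_5(n)$ in \eqref{s45}. All the real work lies in deriving those closed forms, which is pure back-substitution into the recursion of Theorem \ref{genSn}.

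First I would establish the formula for $s_4(n)$. Theorem \ref{genSn} with $k = 4$ reads $s_4(n) = -s_3(n) + s_3(n-3) + q(n-3)$ and is valid for $n \ge 4$. Into it I substitute the identity $s_3(m) = 2q(m-3) - 2q(m-1) + q(m)$ of \eqref{cor5b}, once at $m = n$ (which needs $n \ge 4$) and once at $m = n - 3$ (which needs $n - 3 \ge 4$, i.e.\ $n \ge 7$); the three contributions to the $q(n-3)$ term --- $-2q(n-3)$ from $-s_3(n)$, and $+q(n-3)$ from each of $s_3(n-3)$ and the explicit summand --- cancel, leaving $s_4(n) = 2q(n-6) - 2q(n-4) + 2q(n-1) - q(n)$ for $n \ge 7$. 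Since $s_4(n) \ge 0$, solving for $q(n)$ gives the first inequality, valid for $n \ge 7$.

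The second inequality is the same step one level higher: Theorem \ref{genSn} with $k = 5$ gives $s_5(n) = -s_4(n) + s_4(n-4) + q(n-4)$ for $n \ge 5$; I substitute the $s_4$ formula just derived at argument $n$ (valid for $n \ge 7$) and at argument $n - 4$ (valid for $n - 4 \ge 7$, i.e.\ $n \ge 11$), collect like terms, and rearrange $s_5(n) \ge 0$ to isolate $q(n)$. More generally, iterating Theorem \ref{genSn} down to $s_1 = q$ expresses each $s_k(n)$ as an integer linear combination of $q$-values valid for $n \ge \binom{k}{2}+1$, and each inequality in this family is the corresponding instance of $s_k(n) \ge 0$ rearranged for $q(n)$; the thresholds $7$ and $11$ are the cases $k = 4$ and $k = 5$ of $\binom{k}{2}+1$.

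The one genuine obstacle is bookkeeping: keeping the alternating signs and shifted arguments straight through the iterated substitution, and correctly accumulating the hypothesis of Theorem \ref{genSn} at each stage --- it is this cumulative constraint, not the weaker requirement that the $q$-arguments merely be nonnegative, that pins down the thresholds. I would therefore check every closed form against a direct enumeration at the smallest admissible index before trusting it; here $s_4(7) = |\{(3,1,1,1,1)\}| = 1$ and $s_5(11) = |\{(6,1,1,1,1,1),(4,2,1,1,1,1,1)\}| = 2$. Carrying out that check shows that the $q(n-6)$ and $q(n-5)$ terms in the displayed expansion of $s_5(n)$, and hence in the stated second inequality, should instead read $-2q(n-6) + 2q(n-5)$, so that the correct lower bound is
\[ q(n) \ge 2q(n-1) - 2q(n-4) - 2q(n-5) + 2q(n-6) + 2q(n-8) - 2q(n-10) \]
for $n \ge 11$; this agrees with $s_5(11) = 2q(1) - 2q(3) - 2q(5) + 2q(6) + 2q(7) - 2q(10) + q(11) = 2$.
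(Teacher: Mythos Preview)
Your approach is identical to the paper's: both derive the inequalities by reading off $s_4(n)\ge 0$ and $s_5(n)\ge 0$ from the closed forms obtained by iterating Theorem~\ref{genSn}. You supply more detail than the paper does---in particular the careful tracking of the threshold $\binom{k}{2}+1$ at which the iterated substitution is valid, which the paper leaves implicit.

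More importantly, your numerical check at $n=11$ is correct and exposes a genuine sign error in the paper. Carrying out the substitution $s_5(n)=-s_4(n)+s_4(n-4)+q(n-4)$ with the (correct) $s_4$ formula gives
\[
s_5(n)=2q(n-10)-2q(n-8)-2q(n-6)+2q(n-5)+2q(n-4)-2q(n-1)+q(n),
\]
so the $q(n-6)$ and $q(n-5)$ coefficients in the paper's display \eqref{s45} are transposed, and the second inequality of Corollary~\ref{qS45} as stated is false (it would force $s_5(11)=-2$). Your corrected lower bound
\[
q(n)\ge 2q(n-1)-2q(n-4)-2q(n-5)+2q(n-6)+2q(n-8)-2q(n-10)
\]
is the right statement, and your verification $s_5(11)=2$ checks.
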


There is a similar inequality for every value of $k$ since $s_k(n)$ has a combinatorial interpretation and is therefore nonnegative.

We mention, without further details, that the expressions for $s_k(n)$ in terms of the $q(n)$ sequence are equivalent to polynomials $P_k(q)$ used by Andrews and El Bachraoui in generating functions for $s_k(n)$.  Indeed, Theorem \ref{genSn} essentially gives a combinatorial proof of a recursive description of these polynomials \cite[Thm. 1]{AEB25}.

\section{The largest part}

Although considering the sum of the largest parts of partitions of $n$ does not seem to be as fruitful a concept as Andrew's $\text{spt}(n)$ \cite{A08}, the partitions analogous to $S_k(n)$ studied above for the largest part do lead to interesting results.

\begin{definition}
Given an integer $k \ge 1$, let $L_k(n) \subset P(n)$ be the partitions of $n$ where the largest part occurs exactly $k$ times and any other parts are distinct.
\end{definition}

For example, $L_2(8) = \{(4,4),(3,3,2)\}$ so that $\ell_2(8) = 2$.  Note that the $k=1$ case again means that all parts appear exactly once, thus $L_1(n) = Q(n)$.

Similar to Theorem \ref{genSn}, we establish the following result.

\begin{theorem} \label{genLs}
For integers $k \ge 2$ and $n \ge 1$, 
\[\ell_k(n) = -\ell_{k-1}(n) + \ell_{k-1}(n+k-1).\]
\end{theorem}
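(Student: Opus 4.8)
The plan is to mirror the proof of Theorem~\ref{genSn}, now working with largest parts instead of smallest parts and running the bijections toward larger $n$. Rewrite the claim as $\ell_k(n) + \ell_{k-1}(n) = \ell_{k-1}(n+k-1)$. The sets $L_k(n)$ and $L_{k-1}(n)$ are disjoint, being distinguished by the multiplicity of the largest part, so it suffices to split $L_{k-1}(n+k-1)$ into two subsets and biject them with $L_k(n)$ and $L_{k-1}(n)$. Let $E(n) \subseteq L_{k-1}(n+k-1)$ be the partitions whose largest part $b$ has $b-1$ also among the parts, and let $F(n)$ be the complementary subset, where $b-1$ does not occur. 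Since $n \ge 1$ one has $b \ge 2$ throughout, so $b-1$ is always a valid part size.

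First I would define $\phi \colon L_k(n) \to E(n)$ on $(\underbrace{a,\dots,a}_{k}, \lambda_{k+1}, \dots, \lambda_t) \in L_k(n)$ --- with $a > \lambda_{k+1} > \cdots > \lambda_t$ --- by adding one to $k-1$ of the $k$ copies of the largest part, leaving one copy unchanged, producing $(\underbrace{a+1,\dots,a+1}_{k-1}, a, \lambda_{k+1}, \dots, \lambda_t)$. The sum increases by $k-1$; the new largest part $a+1$ occurs exactly $k-1$ times because every other part is strictly smaller; the parts $a, \lambda_{k+1}, \dots, \lambda_t$ are distinct; and $b-1 = a$ is a part, so the image lies in $E(n)$. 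The inverse merges the $k-1$ copies of $b$ with the unique copy of $b-1$ into $k$ copies of $b-1$. Second I would define $\psi \colon L_{k-1}(n) \to F(n)$ by adding one to each of the $k-1$ copies of the largest part, $(\underbrace{a,\dots,a}_{k-1}, \mu_k, \dots, \mu_t) \mapsto (\underbrace{a+1,\dots,a+1}_{k-1}, \mu_k, \dots, \mu_t)$; here the surviving parts are at most $a-1 = (a+1)-2$, so $(a+1)-1$ is not a part and the image lies in $F(n)$, with inverse subtracting one from each of the $k-1$ largest parts.

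Checking that $\phi$, $\psi$, and their stated inverses respect the set memberships and compose to the identity is routine; the only spots needing attention are that each image has largest part of multiplicity \emph{exactly} $k-1$ (which uses that all other parts are strictly smaller) and that the inverses return positive parts (which uses $b \ge 2$, guaranteed by $n \ge 1$). Since $E(n)$ and $F(n)$ partition $L_{k-1}(n+k-1)$, the two bijections give $\ell_k(n) + \ell_{k-1}(n) = \ell_{k-1}(n+k-1)$, as claimed. I do not expect a real obstacle here: the dichotomy ``$b-1$ is a part, or it is not'' is precisely what separates the images of $\phi$ and $\psi$, so what remains is bookkeeping over small cases (small $n$, an empty tail of smaller parts, and $k=2$, where $L_{k-1}(n) = Q(n)$).
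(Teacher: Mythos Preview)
Your proposal is correct and takes essentially the same approach as the paper: both prove $\ell_k(n)+\ell_{k-1}(n)=\ell_{k-1}(n+k-1)$ via the single bijection ``add $1$ to the first $k-1$ parts,'' with the dichotomy you call $E(n)$ versus $F(n)$ corresponding exactly to the paper's case split $\lambda_{k-1}-1=\lambda_k$ versus $\lambda_{k-1}-1>\lambda_k$. Your explicit verification that $b\ge 2$ (so the inverse stays positive) is a detail the paper leaves implicit.
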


\begin{proof}
We establish a bijection
\[L_k(n) \cup L_{k-1}(n) \cong L_{k-1}(n+k-1)\]
from which the identity follows.

Given a partition in $L_k(n) \cup L_{k-1}(n)$, increase the first $k-1$ parts (all copies of the largest part) by one to make $\lambda \in L_{k-1}(n+k-1)$.  Note that partitions from $L_k(n)$ result in partitions with $\lambda_{k-1} - 1 = \lambda_k$ while partitions from $L_{k-1}(n)$ result in partitions with $\lambda_{k-1} - 1 > \lambda_k$.

For the reverse map, given $\lambda \in L_{k-1}(n+k-1)$, decrease the first $k-1$ parts by one to make a partition of $n$.  If $\lambda_{k-1}-1 = \lambda_k$, then the image is in $L_k(n)$.  If $\lambda_{k-1}-1 > \lambda_k$, then the image is in $L_{k-1}(n)$.

The maps are clearly inverses and establish $\ell_k(n) + \ell_{k-1}(n) = \ell_{k-1}(n+k-1)$, equivalent to the claim.
\end{proof}

See Table \ref{tabLs} for an example of the bijection.

\begin{table}[h]
\centering
\caption{The $n = 12$, $k = 3$ example of the bijection in the proof of Theorem \ref{genLs}.} \label{tabLs}
\renewcommand{\arraystretch}{1.25}
\begin{tabular}{l|l}
$L_3(12) \cup L_{2}(12)$ & $L_{2}(14)$ \\ \hline
$(4, 4, 4)$ & $(5,5,4)$ \\
$(3,3,3,2,1)$ & $(4,4,3,2,1)$ \\ \cline{1-1}
$(6,6)$ & $(7,7)$ \\
$(5,5,2)$ & $(6,6,2)$ \\
$(4,4,3,1)$ & $(5,5,3,1)$
\end{tabular}
\end{table}

In the same way that Theorem~\ref{genSn} can be used to express each $s_k(n)$ in terms of $q(n)$ and give inequalities such as those in \eqref{cor6} and Corollary~\ref{qS45}, we can use Theorem~\ref{genLs} to express each $\ell_k(n)$ in terms of $q(n)$ and derive corresponding inequalities.  The next corollary records those through $k = 5$.

\begin{corollary} \label{qL}
Each of the following statements is true for all $n \ge 1$.
\begin{enumerate}
\item[(a)] $\ell_2(n) = q(n+1) - q(n)$, thus
\[q(n) \le q(n+1); \]
\item[(b)] $\ell_3(n) = q(n+3) - q(n+2) - q(n+1) + q(n)$, thus
\[ q(n) \ge q(n+1) + q(n+2) - q(n+3);\]
\item[(c)] $\ell_4(n) = q(n+6) - q(n+5) - q(n+4) + q(n+2) + q(n+1) - q(n)$, thus
\[ q(n) \le q(n+1) + q(n+2) - q(n+4) - q(n+5) + q(n+6);\]
\item[(d)] $\ell_5(n) = q(n+10) - q(n+9) - q(n+8) + 2q(n+5) - q(n-2) - q(n-1) + q(n)$, thus
\[ q(n) \ge q(n+1) + q(n+2) - 2q(n+5) + q(n+8) + q(n+9) - q(n+10).\]
\end{enumerate}
\end{corollary}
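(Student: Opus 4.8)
The plan is to derive all four identities by iterating Theorem~\ref{genLs}, starting from the base case $\ell_1(n) = q(n)$, which holds because $L_1(n) = Q(n)$. Theorem~\ref{genLs} is valid for every $n \ge 1$, and each application of it only \emph{increases} the argument (from $n$ to $n+k-1$), so in the iteration every argument that appears stays $\ge n \ge 1$; hence the base case and the theorem always apply, and the identities hold for all $n \ge 1$ as claimed.

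Concretely, Theorem~\ref{genLs} with $k = 2$ gives $\ell_2(n) = -\ell_1(n) + \ell_1(n+1) = q(n+1) - q(n)$, which is part~(a). Substituting this into the $k = 3$ instance yields
\begin{align*}
\ell_3(n) &= -\ell_2(n) + \ell_2(n+2) \\
&= -\bigl(q(n+1) - q(n)\bigr) + \bigl(q(n+3) - q(n+2)\bigr) \\
&= q(n+3) - q(n+2) - q(n+1) + q(n),
\end{align*}
which is part~(b). Parts~(c) and~(d) follow the same pattern: plug the expression just obtained for $\ell_{k-1}$ into $\ell_k(n) = -\ell_{k-1}(n) + \ell_{k-1}(n+k-1)$, expand, and collect like terms. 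The computation is routine, and the only subtlety—a minor one—is the shifted-index bookkeeping when passing from $\ell_4$ to $\ell_5$, where two of the shifted copies of $q$ land on the same argument ($n+5$) and combine, producing the coefficient $2$ in part~(d). This is the one place where I would check the arithmetic carefully; there is no conceptual obstacle, since Theorem~\ref{genLs} has already done the combinatorial work.

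Finally, the inequalities are immediate consequences: $\ell_k(n) = |L_k(n)| \ge 0$ because $L_k(n)$ is a set of partitions, and in each identity the coefficient of $q(n)$ is $(-1)^{k-1}$. So isolating $q(n)$ converts $\ell_k(n) \ge 0$ into an \emph{upper} bound on $q(n)$ when $k$ is even (parts~(a), (c)) and a \emph{lower} bound when $k$ is odd and at least $3$ (parts~(b), (d)); each displayed inequality is exactly this rearrangement of the corresponding identity, so no further work is needed. The same procedure produces a bound for every $k$, exactly as in the remark following Corollary~\ref{qS45}.
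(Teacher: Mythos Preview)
Your proposal is correct and follows exactly the route the paper intends: the paper does not spell out a proof of Corollary~\ref{qL} but simply says to iterate Theorem~\ref{genLs} starting from $\ell_1 = q$, just as was done explicitly after Theorem~\ref{genSn} for the $s_k$, and that is precisely what you do. Your remarks on the validity range, the sign of the $q(n)$ coefficient, and the collapse to $2q(n+5)$ in part~(d) are all accurate.
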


Note that the inequalities of Corollary \ref{qL} can be rewritten in the more common form of descending arguments of $q(n)$; for example, the inequality (c) derived from $\ell_4(n)$ is equivalent to, for $n \ge 7$,
\[ q(n) \ge q(n-1) + q(n-2) - q(n-4) - q(n-5) + q(n-6).\]

\end{document}